\newtheorem{theorem}{Theorem}[section]
\newtheorem{lemma}[theorem]{Lemma}
\newtheorem{proposition}[theorem]{Proposition}
\newtheorem{corollary}[theorem]{Corollary}
\theoremstyle{definition}
\newtheorem{definition}[theorem]{Definition}
\theoremstyle{remark}
\newtheorem{remark}[theorem]{Remark}
\numberwithin{equation}{section}
\begin{document}

\title{Generators for the representation rings of certain wreath products}

\author{Nate Harman}



\maketitle

\begin{abstract}
Working in the setting of Deligne categories, we generalize a result of Marin that hooks generate the representation ring of symmetric groups to wreath products of symmetric groups with a fixed finite group or Hopf algebra.  In particular, when we take the finite group to be cyclic order 2 we recover a conjecture of Marin about Coxeter groups in type B.

\end{abstract}

\section{Introduction}

In \cite{B}, Deligne defined the categories $\underline{Rep}(S_t)$ for $t$ an arbitrary complex number. In the context of the Church-Farb framework of representation stability \cite{J}  we may think of these Deligne categories at generic values of $t$ as models for stable categories of representations of the symmetric group.  In particular they satisfy the following ``stable" properties:

\begin{itemize}

\item For generic $t$, $\underline{Rep}(S_t)$ is semisimple with irreducible objects $\tilde{V}(\lambda)$ indexed by partitions.  These interpolate the irreducible representations $V(\lambda(n))$ of $S_n$ with $n>>0$, where we add a sufficiently long first row to $\lambda$ to make it the right size.  In particular these representations are known to have polynomial growth of dimension $Dim(V(\lambda(n))) = p_\lambda(n)$, and in the Deligne category we have $Dim(\tilde{V}(\lambda) )= p_\lambda(t)$.

\item If $k \in \mathbb{Z}_+$ we have induction functors $ Rep(S_k) \boxtimes \underline{Rep}(S_t) \rightarrow  \underline{Rep}(S_{t+k})$, where $Rep(S_k)$ denotes the usual category of complex representations of $S_k$.  The multiplicity $\tilde{c}_{\lambda,\mu}^\nu$ of $\tilde{V}(\nu)$ in ${\rm Ind}(V(\lambda)\boxtimes \tilde{V}(\mu))$ is equal to the stable limit of Littlewood-Richardson coefficients $c_{\lambda,\mu(n)}^{\nu(n+k)}$. Similar statements hold for restriction, with an appropriate version of Frobenius reciprocity.

\item The structure constants for the tensor product are the so called reduced Kronecker coefficients  which are the stable limits of the Kronecker coefficients. 

\end{itemize}

In \cite{A} Marin proves that hooks, i.e. partitions of the form $(n-k,1^k)$ generate the representation ring of $\text{Rep}(S_n)$. While he doesn't use the language, his proof mostly takes place in the stable setting and his argument shows that hooks freely generate the stable representation ring. This then implies they must generate in the classical setting (although not freely).  So we may think of this result as an application of stable representation theory to classical representation theory.

In the Deligne category setting Marin's result appears in Deligne's  original paper \cite{B}, saying that the Grothendieck ring of the Deligne category is freely generated by objects corresponding to hooks.  The result for the classical case follows by projecting from the Deligne category onto $\text{Rep}(S_n)$, and looking at the induced map of Grothendieck rings. 

The proof is done by defining a filtration on the Deligne category such that the associated graded Grothendieck ring is isomorphic in a natural way to the ring $(\bigoplus_n K_0(\text{Rep}(S_n)), \cdot)$  with multiplication coming from inducing representations from $S_n \times S_m$ to $S_{n+m}$.  This ring is well known to be isomorphic to the ring of symmetric functions, and the elementary symmetric functions  correspond to hooks.

Deligne categories for wreath products with a finite group or Hopf algebra were defined by Knop \cite{E}. In \cite{D} Mori defined wreath product Deligne categories associated to an arbitrary $k$-linear category $\mathcal{C}$, which is a tensor category whenever $\mathcal{C}$ is. As for the symmetric group these may be thought of as stable versions of the more classical wreath product categories in ways analogous to those listed above.  See \cite{F} for a more detailed overview of representation theory in complex rank, including discussions of the constructions mentioned here.

Motivated by a conjecture of Marin about generalizing his result to Coxeter groups in type B (which are wreath products), the goal of this paper is to prove similar results about the Grothendieck rings of these Deligne categories.  By projecting from the Deligne categories to classical representation categories, we obtain systems of generators for the representation rings of wreath products with finite groups, answering the conjecture of Marin in the case when the finite group is cyclic of order 2.

\section*{Acknowledgements}
The author is grateful to P. Etingof for suggesting the question, and to V. Ostrik for his many helpful comments on the preliminary version of this paper.  This material is based upon work supported by the National Science Foundation Graduate Research Fellowship. 

\section{The categories $\mathcal{W}_n( \mathcal{C})$ and $S_t(\mathcal{C})$}

First we will review Mori's construction of the wreath product Deligne categories $S_t(\mathcal{C})$, and the aspects of the theory useful for our purposes. This section is largely expository, and all the proofs will be left to the references.  

Fix $k$ an algebraically closed field of characteristic zero, and let $\mathcal{C}$ be a $k$-linear semisimple\footnote{Since we will be mostly working at the level of the Grothendieck group, the semisimple condition can be relaxed to an artinian condition and the main result will still hold but we will assume it for simplicity.} tensor category with a unit object $\textbf{1}$ satisfying $End_{\mathcal{C}} (\textbf{1}) = k$. 

\begin{definition}[\textbf{Mori \cite{D} Definition 3.13}]

Consider the action of the symmetric group $S_n$ on the category $\mathcal{C}^{\boxtimes n}$  by permuting the factors.  We denote the equivariantization of this action by $\mathcal{W}_n( \mathcal{C})$ and refer to it as a wreath product category.
\end{definition}

In the case that $\mathcal{C}$ is the category of representations of a finite group $G$, the category $\mathcal{W}_n(\mathcal{C})$ equivalent to the representation category of the wreath product $S_n(G)$  (also denoted by $G \wr S_n$ or $G^n \rtimes S_n$) of $G$ with a symmetric group. Many of the standard facts about the representation theory of these groups hold in the categorical setting as well, we will highlight some of these properties that are important for our purposes.

\begin{proposition}[\textbf{Mori \cite{D} sections  3.5 and 5.3}]
\

\begin{enumerate}

\item If $I(\mathcal{C})$ is an indexing set for equivalence classes of irreducible objects in $\mathcal{C}$, then the irreducible objects of $\mathcal{W}_n( \mathcal{C})$ are indexed by the set:

$$\mathcal{P}_n^{\mathcal{C}} = \{ \lambda: I(\mathcal{C}) \rightarrow \mathcal{P} \ \text{such that} \ |\lambda| :=\sum_{U\in I(C)}|\lambda(U)| = n \}$$

where $\mathcal{P}$ denotes the set of partitions. We denote the irreducible object corresponding to $\lambda \in \mathcal{P}_n^{\mathcal{C}}$ by $V(\lambda)$.

\item The natural inclusions $S_n \times S_m \hookrightarrow S_{n+m}$ give rise to induction functors $$ {\rm Ind}_{\mathcal{W}_n( \mathcal{C}) \boxtimes \mathcal{W}_m( \mathcal{C})}^{\mathcal{W}_{n+m}(\mathcal{C})} : \mathcal{W}_n( \mathcal{C}) \boxtimes \mathcal{W}_m( \mathcal{C}) \rightarrow \mathcal{W}_{n+m}( \mathcal{C})$$

admitting two sided adjoints ${\rm Res}_{\mathcal{W}_n( \mathcal{C}) \boxtimes \mathcal{W}_m( \mathcal{C})}^{\mathcal{W}_{n+m}(\mathcal{C})}$ referred to as restriction functors. 

\item If we take the Grothendieck ring of $\mathcal{W}_*(\mathcal{C}) := \bigoplus_n \mathcal{W}_n( \mathcal{C})$ with a tensor structure corresponding to induction, then the map

$$[V(\lambda)] \rightarrow \bigotimes_{U \in I(\mathcal{C})} s_{\lambda(U)}$$

is a graded isomorphism with the ring $\bigotimes_{U \in I(\mathcal{C})}\Lambda$ where $\Lambda$ is the ring of symmetric functions, and $s_\lambda$ is a Schur function.

\end{enumerate}
\end{proposition}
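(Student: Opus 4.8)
The plan is to obtain all three parts from the general theory of equivariantizations of semisimple $k$-linear categories under finite groups, together with the classical representation theory of symmetric groups. For (1): $\mathcal{C}^{\boxtimes n}$ is semisimple with simple objects the external products $U_1 \boxtimes \cdots \boxtimes U_n$, $U_i \in I(\mathcal{C})$, and $S_n$ permutes them. I would then invoke the standard classification of simple objects of an equivariantization $\mathcal{D}^G$ ($G$ finite, $\mathcal{D}$ semisimple, $\mathrm{char}\,k = 0$): they are indexed by pairs consisting of a $G$-orbit on the simples of $\mathcal{D}$ together with an irreducible projective representation of its stabilizer for the attached $2$-cocycle. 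Here $S_n$-orbits correspond to multiplicity functions $m \colon I(\mathcal{C}) \to \mathbb{Z}_{\geq 0}$ with $\sum_U m(U) = n$; the stabilizer of a representative is the Young subgroup $Y_m = \prod_U S_{m(U)}$; and the cocycle is trivial because $S_n$ acts honestly on $U^{\boxtimes n}$ inside the Deligne product (permuting identical external factors carries no sign). Irreducibles of $Y_m$ are $\bigotimes_U W_U$ with $W_U \in \mathrm{Irr}(S_{m(U)})$, i.e.\ tuples $(\lambda(U) \vdash m(U))_U$; ranging over all $m$ these assemble into $\mathcal{P}_n^{\mathcal{C}}$.

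For (2): equivariantization commutes with Deligne products for finite groups, so $\mathcal{W}_n(\mathcal{C}) \boxtimes \mathcal{W}_m(\mathcal{C}) \simeq (\mathcal{C}^{\boxtimes(n+m)})^{S_n \times S_m}$, and the induction functor is the composite of this equivalence with $\mathrm{Ind}_{S_n \times S_m}^{S_{n+m}}$, which sends $X \mapsto \bigoplus_{g \in S_{n+m}/(S_n\times S_m)} {}^g X$ with its canonical equivariant structure. Since $S_n \times S_m$ has finite index this functor is simultaneously left and right adjoint to the restriction (partial-forgetful) functor, with the usual unit and counit, which gives the claimed two-sided Frobenius adjunction. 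For (3), I would first record the additive structure: from the analysis in (1) the orbit-type decomposition of $\mathcal{C}^{\boxtimes n}$ is $S_n$-stable, the type-$m$ block with its $S_n$-action being the category of $S_n$-equivariant sheaves on $S_n/Y_m$, whose equivariantization is $\mathrm{Rep}(Y_m) \simeq \boxtimes_U \mathrm{Rep}(S_{m(U)})$; hence $\mathcal{W}_*(\mathcal{C}) \simeq \bigoplus_m \boxtimes_U \mathrm{Rep}(S_{m(U)})$ over finitely-supported $m$, so that $K_0(\mathcal{W}_*(\mathcal{C})) = \bigoplus_m \bigotimes_U K_0(\mathrm{Rep}(S_{m(U)}))$ as graded groups.

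It then remains to identify the multiplication. I would run a Mackey-type computation showing that every simple constituent of the induction of a tuple of type $m$ with a tuple of type $m'$ has type $m + m'$, so that the induction product is diagonal for the type grading; a closer look at the relevant double cosets shows that on each factor $U$ it restricts to the ordinary induction product $K_0(\mathrm{Rep}(S_{m(U)})) \otimes K_0(\mathrm{Rep}(S_{m'(U)})) \to K_0(\mathrm{Rep}(S_{m(U)+m'(U)}))$. Thus $K_0(\mathcal{W}_*(\mathcal{C})) \cong \bigotimes_U \big(\bigoplus_k K_0(\mathrm{Rep}(S_k))\big)$ as graded rings, and applying the classical characteristic isomorphism $\bigoplus_k K_0(\mathrm{Rep}(S_k)) \xrightarrow{\ \sim\ } \Lambda$, $[V(\mu)] \mapsto s_\mu$, in each factor gives $[V(\lambda)] \mapsto \bigotimes_U s_{\lambda(U)}$. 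A cleaner alternative is to verify that $K_0(\mathcal{W}_*(\mathcal{C}))$ with induction as product and restriction as coproduct is a positive self-adjoint Hopf algebra in the sense of Zelevinsky whose basic primitive elements are the classes $[U]$ of the simple objects of $\mathcal{W}_1(\mathcal{C}) = \mathcal{C}$, and then to quote Zelevinsky's structure theorem (replacing $\bigotimes$ by a restricted tensor product when $I(\mathcal{C})$ is infinite). In either route the one substantive step — and the expected obstacle — is exactly this: the double-coset bookkeeping showing the induction product is diagonal and factors through the symmetric groups of the individual simples of $\mathcal{C}$, respectively the check of the PSH axioms. Parts (1) and (2) are routine instances of the general theory.
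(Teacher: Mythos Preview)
The paper does not prove this proposition at all: it is explicitly attributed to Mori (\cite{D}, sections 3.5 and 5.3), and the surrounding section announces that ``all the proofs will be left to the references.'' So there is no in-paper argument to compare your proposal against.

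That said, your sketch is correct and is essentially the standard route. The classification in (1) via the general structure of $\mathcal{D}^G$ for $G$ finite acting on a semisimple $\mathcal{D}$, with orbits on simples and irreducible (projective) representations of stabilizers, is exactly how one parametrizes irreducibles of $\mathcal{W}_n(\mathcal{C})$; the triviality of the cocycle follows from the fact that the symmetric braiding of $\mathcal{C}^{\boxtimes n}$ gives an honest $S_n$-action on $U^{\boxtimes m}$. Part (2) is indeed a formal consequence of finite-index induction/restriction between equivariantizations. For (3), both of your options---the direct Mackey/double-coset computation showing the induction product is diagonal in the type grading and reduces factorwise to the symmetric-group induction product, and the Zelevinsky PSH argument---are standard and both appear in the literature on wreath products (the group case goes back to Specht and Macdonald; the PSH version is in Zelevinsky's monograph). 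Mori's treatment in \cite{D} is closer to your first option, working concretely with the orbit decomposition and reducing to the classical Littlewood--Richardson picture in each factor. Your identification of the ``one substantive step'' is accurate: once the diagonal/factorwise reduction is established, the rest is the classical characteristic map.

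One minor caution: in (1) you should note that when $I(\mathcal{C})$ is infinite the indexing set $\mathcal{P}_n^{\mathcal{C}}$ consists of finitely supported functions, and correspondingly the tensor product $\bigotimes_{U}\Lambda$ in (3) is the restricted tensor product (only finitely many factors differ from $1$). You flag this in your Zelevinsky alternative but it applies to the direct argument as well.
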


We now want to construct categories $S_t(\mathcal{C})$ which interpolate $\mathcal{W}_n( \mathcal{C})$ to non-integer values of $t$ analogously to the construction of the Deligne category $\underline{Rep}(S_t)$ as explained by Comes and Ostrik in \cite{I}. To do this we will define certain well behaved standard objects in $\mathcal{W}_n(\mathcal{C})$ for different values of $n$.

 Let $I = \{i_1,i_2,\dots, i_k\}$ be a finite set and $U_I = (U_i)_{i\in I}$ a collection of objects in $\mathcal{C}$. If $n \ge k$ we can consider $U_{i_1} \boxtimes U_{i_2} \boxtimes ...\boxtimes U_{i_k} \boxtimes \textbf{1}_{\mathcal{C}}^{\boxtimes n-k}$ as an object of  $\mathcal{C}^{\boxtimes k} \boxtimes \mathcal{W}_{n-k}(\mathcal{C})$.  We will let $[U_I]_n \in \mathcal{W}_n( \mathcal{C})$ denote the image of this under the appropriate induction functor, when $n < k$ it will be convenient to define $[U_I]_n$ to be the zero object. We have the following lemma:

\begin{lemma}[\textbf{Mori \cite{D} definition 4.5 and lemma 4.9}]
\

\begin{enumerate}
\item For finite sets $I$ and $J$ and collections of objects $U_I$, $V_J$ as above there exists an explicitly described vector space $H(U_I,V_J)$ such that $Hom_{ \mathcal{W}_n( \mathcal{C})}([U_I]_n,[V_J]_n) \cong  H(U_I,V_J)$ for all $n$ sufficiently large.

\item There exists a map $ \Phi: H(V_I,W_K) \otimes H(U_I,V_J) \rightarrow H(U_I,W_K)\otimes k[x]$ such that for $n$ sufficiently large the composition of $\Phi$ with the evaluation at $n$ map $ev_n: H(U_I,W_K)\otimes k[x] \rightarrow H(U_I,W_K)$  corresponds to the composition map $Hom_{ \mathcal{W}_n( \mathcal{C})}([V_J]_n,[W_K]_n) \otimes Hom_{ \mathcal{W}_n( \mathcal{C})}([U_I]_n,[V_J]_n) \rightarrow Hom_{ \mathcal{W}_n( \mathcal{C})}([U_I]_n,[W_k]_n).$

\end{enumerate}

\end{lemma}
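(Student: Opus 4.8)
\emph{Proof strategy.} The plan is to prove both parts by reducing computations in $\mathcal{W}_n(\mathcal{C})$ to a Mackey-type double-coset decomposition and then isolating the summands that are independent of $n$. For part (1), I would first invoke the induction/restriction adjunction of the preceding Proposition to rewrite
$$\mathrm{Hom}_{\mathcal{W}_n(\mathcal{C})}\bigl([U_I]_n,[V_J]_n\bigr)\ \cong\ \mathrm{Hom}_{\mathcal{C}^{\boxtimes|I|}\boxtimes\mathcal{W}_{n-|I|}(\mathcal{C})}\bigl(U_{i_1}\boxtimes\cdots\boxtimes U_{i_{|I|}}\boxtimes\mathbf{1}^{\boxtimes(n-|I|)},\ \mathrm{Res}\,[V_J]_n\bigr),$$
and then compute $\mathrm{Res}\,[V_J]_n$ via the Mackey formula for the equivariantization $\mathcal{W}_n(\mathcal{C})=(\mathcal{C}^{\boxtimes n})^{S_n}$: since $[V_J]_n$ is induced from $\mathcal{C}^{\boxtimes|J|}\boxtimes\mathcal{W}_{n-|J|}(\mathcal{C})$, its restriction to $\mathcal{C}^{\boxtimes|I|}\boxtimes\mathcal{W}_{n-|I|}(\mathcal{C})$ breaks into a direct sum indexed by the double cosets in $(S_{|I|}\times S_{n-|I|})\backslash S_n/(S_{|J|}\times S_{n-|J|})$, each of which is recorded by how many ``special'' source coordinates meet ``special'' target coordinates together with the resulting partial matching; the summand attached to a coset is an induction of a Hom space in a smaller wreath product category assembled from the $U_i$, the $V_j$, and copies of $\mathbf{1}$.

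The second step is to take $\mathrm{Hom}$ into this decomposition and see what survives. Using $\mathrm{End}_{\mathcal{C}}(\mathbf{1})=k$ together with the rigidity of $\mathcal{C}$, each coset contributes a tensor product of morphism spaces in $\mathcal{C}$ between fixed objects built from the $U_i$ and $V_j$ (hence independent of $n$), times a one-dimensional factor bookkeeping how the leftover $\mathbf{1}$-slots are matched. Once $n$ exceeds an explicit bound depending only on $|I|$ and $|J|$, the set of cosets that can contribute and the objects appearing in the associated $\mathcal{C}$-Hom spaces both stabilize, and one takes the resulting fixed vector space as the definition of $H(U_I,V_J)$: concretely it should come out to be a direct sum, over set partitions of $I\sqcup J$ with each block decorated by a morphism in $\mathcal{C}$ relating the tensor products of the decorations on the two sides of that block, of the corresponding morphism spaces.

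For part (2) I would describe composition diagrammatically. Given diagrams representing elements of $H(V_J,W_K)$ and $H(U_I,V_J)$, stack them along the shared row of $J$-points and contract: blocks joined through the middle merge and their $\mathcal{C}$-decorations compose, using the rigid structure to pair up source and target tensor factors. A connected middle component supported entirely on $\mathbf{1}$-decorated interior points -- a ``closed loop'' in the partition-category sense -- contributes no $\mathcal{C}$-morphism but a scalar which, at finite $n$, is the number of still-available interior slots, an affine-linear function of $n$; collecting all such loop factors yields a polynomial in a formal variable $x$. This contraction is the map $\Phi\colon H(V_J,W_K)\otimes H(U_I,V_J)\to H(U_I,W_K)\otimes k[x]$, and by construction $\mathrm{ev}_n\circ\Phi$ recovers the genuine composition in $\mathcal{W}_n(\mathcal{C})$ for all large $n$.

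The main obstacle is the bookkeeping in the Mackey step: carrying out the double-coset analysis of $\mathrm{Res}\circ\mathrm{Ind}$ for the equivariantized categories precisely enough to pin down a uniform threshold on $n$ beyond which all of the above stabilizes -- with the threshold depending only on $|I|$ and $|J|$, not growing -- and to confirm that the loop contributions in part (2) are genuinely affine-linear in $n$, so that $\Phi$ lands in $H(U_I,W_K)\otimes k[x]$ rather than in some larger completion. These are exactly the structural features that make the interpolation category $S_t(\mathcal{C})$ well-defined; since the statement is recalled from Mori's work, one may also consult \cite{D} for the details.
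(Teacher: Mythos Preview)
The paper does not prove this lemma at all: the surrounding section is explicitly declared to be expository, with all proofs deferred to Mori \cite{D}, and the lemma itself is labeled as a restatement of Mori's Definition~4.5 and Lemma~4.9. So there is no argument in the paper to compare against; your sketch is strictly more than what the paper provides.

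That said, your outline is essentially the argument Mori gives. The Mackey/double-coset computation of $\mathrm{Res}\circ\mathrm{Ind}$ followed by the observation that only the combinatorics of how the distinguished $I$- and $J$-slots interact survives once $n$ is large is exactly how $H(U_I,V_J)$ gets identified with a space of decorated set partitions of $I\sqcup J$, and the diagrammatic stacking with closed interior components contributing polynomial factors in $n$ is precisely the content of Mori's Lemma~4.9. Two small points of precision: you invoke rigidity of $\mathcal{C}$, but what is actually used is just the factorization $\mathrm{Hom}_{\mathcal{C}^{\boxtimes n}}(A_1\boxtimes\cdots,B_1\boxtimes\cdots)\cong\bigotimes_i\mathrm{Hom}_{\mathcal{C}}(A_i,B_i)$ together with $\mathrm{End}_{\mathcal{C}}(\mathbf{1})=k$; and the scalar attached to a single closed component is not literally affine-linear in $n$ but rather a value of a falling-factorial type (since the free $\mathbf{1}$-slots are chosen without repetition), though the product over all such components is of course still polynomial, so your conclusion that $\Phi$ lands in $H(U_I,W_K)\otimes k[x]$ is correct.
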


Using this lemma we are able to define an auxiliary category $S_t(\mathcal{C})_0$  for an arbitrary number $t \in k$ as follows:

\noindent \textbf{Objects} are symbols $\langle U_I\rangle_t$ where $U_I$ is a finite collection of objects of $\mathcal{C}$.

\noindent \textbf{Morphisms} from $\langle U_I\rangle_t$ to $\langle V_J\rangle_t$ are given by the space $H(U_I,V_J)$ from the first part of the previous lemma.

\noindent \textbf{Composition} is given by $ev_t \circ \Phi$ where $\Phi$ is the map from the second part of the previous lemma, and $ev_t$ is the evaluation at $t$ map.

\begin{definition}[\textbf{Mori \cite{D} Definitions 4.10 and 4.16}]
The wreath product Deligne category $S_t(\mathcal{C})$ is defined as the pseudo-abelian or Karoubian envelope of $S_t(\mathcal{C})_0$, it comes equipped a natural tensor structure coming from the tensor structure of $\mathcal{C}$.

\end{definition}

Now we want to explicitly describe these Deligne categories $S_t(\mathcal{C})$ and how they interpolate the wreath product categories $\mathcal{W}_n(\mathcal{C})$. First define the set $\mathcal{P}^{\mathcal{C}}$ by:
$$\mathcal{P}^{\mathcal{C}}= \cup_n \mathcal{P}_n^{\mathcal{C}} = \{ \lambda: I(\mathcal{C}) \rightarrow \mathcal{P}| \ \lambda(U) =\emptyset  \ \text{for all but finitely many} \ U\}$$
  and for $\lambda \in\mathcal{P}^{\mathcal{C}}$ and $n$ sufficiently large define $\lambda_n \in \mathcal{P}_n^{\mathcal{C}}$ by:
    
\begin{equation}{\label{Z}}
   \lambda_n(U) = \left\{
     \begin{array}{lr}
       \lambda(U) & : U \ne \textbf{1}_\mathcal{C}\\
       (n-|\lambda|, \lambda(U)) & : U = \textbf{1}_\mathcal{C}
     \end{array}
   \right.
\end{equation}

\noindent That is $\lambda_n$ is obtained from $\lambda$ by adding a long first row to the partition corresponding to the unit object of $\mathcal{C}$, and leaving the rest the same.  If $n$ is not sufficiently large, i.e. if $(n-|\lambda|, \lambda(\textbf{1}))$ is not a valid partition, then it will be convenient to define $\lambda_n = 0$.  We have the following description of $S_t(\mathcal{C})$:

\begin{theorem}[\textbf{Mori \cite{D} Theorems 4.13 and 5.6}]
\

\begin{enumerate}

\item There exists a bijection between $\mathcal{P}^{\mathcal{C}}$ and indecomposable objects of $S_t(\mathcal{C})$.  We denote these corresponding indecomposable objects by $\tilde{V}(\lambda)$.

\item When $t \notin \mathbb{Z}_{\ge0}$ the category $S_t(\mathcal{C})$ is semisimple.

\item For $n \in \mathbb{Z}_{\ge0}$ there exists a full, essentially surjective tensor functor $S_n(\mathcal{C}) \rightarrow \mathcal{W}_n(\mathcal{C})$ which we will refer to as a projection functor.

\item For all $\lambda \in \mathcal{P}^{\mathcal{C}}$ there exists $N$ such that the image of $\tilde{V}(\lambda)$ under the projection from $S_n(\mathcal{C})$ to $\mathcal{W}_n(\mathcal{C})$ is isomorphic to $V(\lambda_n)$ for all $n \ge N$.

\end{enumerate}
\end{theorem}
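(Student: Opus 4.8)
The plan is to follow the template of Comes and Ostrik's treatment of $\underline{Rep}(S_t)$ in \cite{I}, transported through Mori's stabilized Hom-spaces: the whole category is controlled by the endomorphism algebras of the standard objects. Write $A_x(U_I)$ for the $k[x]$-algebra with underlying module $H(U_I,U_I)\otimes k[x]$ and multiplication $\Phi$ from the Lemma, so that specializing $x\mapsto n$ recovers $End_{\mathcal{W}_n(\mathcal{C})}([U_I]_n)$ for $n\gg 0$, while specializing $x\mapsto t$ gives $End_{S_t(\mathcal{C})_0}(\langle U_I\rangle_t)$. Because $\mathcal{W}_n(\mathcal{C})$ is semisimple --- it is the equivariantization of the semisimple category $\mathcal{C}^{\boxtimes n}$ by the finite group $S_n$ --- each $A_n(U_I)$ is semisimple, in fact split since $End_{\mathcal{C}}(\mathbf{1})=k$. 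The first task is a stabilization statement: using the symmetric-function dictionary of the Proposition (part 3) together with the Pieri rule, one shows that for $n\gg 0$ there is a decomposition $[U_I]_n\cong\bigoplus_\mu V(\mu_n)^{\oplus m_\mu(U_I)}$ in which the finite set of relevant $\mu\in\mathcal{P}^{\mathcal{C}}$ and the multiplicities $m_\mu(U_I)$ do not depend on $n$. By a Tits-deformation argument this forces $A_x(U_I)\otimes_{k[x]}k(x)$ to be split semisimple over $k(x)$ of the same numerical type, with central idempotents that are explicit elements of $A_x(U_I)$ divided by a product of ``Gram-type'' polynomials in $x$.

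For (1) and (2): for $t$ outside the zero loci of these denominator polynomials the specialization $A_t(U_I)$ is (split) semisimple of the stable numerical type, and a direct examination of the Gram matrices shows those loci lie in $\mathbb{Z}_{\ge 0}$; hence $S_t(\mathcal{C})$ is semisimple when $t\notin\mathbb{Z}_{\ge 0}$. Lifting the idempotents $e_\mu\in A_x(U_I)\otimes k(x)$, specialized at such $t$, produces indecomposable objects $\tilde V(\mu)$ in $S_t(\mathcal{C})$; they are pairwise non-isomorphic, every object of $S_t(\mathcal{C})$ is a summand of a finite direct sum of standard objects $\langle U_I\rangle_t$ whose endomorphisms are governed by the $H$-spaces, so the $\tilde V(\mu)$ exhaust the indecomposables, and as $U_I$ ranges over finite collections exactly the index set $\mathcal{P}^{\mathcal{C}}$ appears --- the same bookkeeping as in the stable decomposition, with $\mathbf{1}_{\mathcal{C}}$ playing the role that produces the ``long first row'' in \eqref{Z}. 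For $t\in\mathbb{Z}_{\ge 0}$ semisimplicity fails, but (1) survives: one runs the Comes--Ostrik lifting-of-idempotents and block argument to show the primitive (non-central) idempotents of $A_x(U_I)$ still specialize without collision to a complete set of indecomposables, the only change being that some $\tilde V(\mu)$ cease to be simple.

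For (3) and (4): fix $n\in\mathbb{Z}_{\ge 0}$. The ``evaluation at $n$'' maps $H(U_I,V_J)\to Hom_{\mathcal{W}_n(\mathcal{C})}([U_I]_n,[V_J]_n)$ are surjective and, by part 2 of the Lemma, intertwine the composition $ev_n\circ\Phi$ of $S_n(\mathcal{C})_0$ with composition in $\mathcal{W}_n(\mathcal{C})$; they are also compatible with the tensor structures, both of which are built from that of $\mathcal{C}$. Thus $\langle U_I\rangle_n\mapsto[U_I]_n$ gives a tensor functor $S_n(\mathcal{C})_0\to\mathcal{W}_n(\mathcal{C})$, full because the Hom-maps are onto; passing to Karoubian envelopes yields the projection functor, still full, and essentially surjective because the $[U_I]_n$ generate $\mathcal{W}_n(\mathcal{C})$ as a Karoubian category (again by the symmetric-function dictionary, which realizes every $V(\mu)$ with $|\mu|=n$ as a summand of a suitable $[U_I]_n$). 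Finally, given $\lambda\in\mathcal{P}^{\mathcal{C}}$, pick $U_I$ so that $\tilde V(\lambda)$ is cut out of $\langle U_I\rangle_t$ by an idempotent $e_\lambda(x)$, and take $N$ large enough that the stabilization above holds for this $U_I$ and that $\lambda_N$ is a genuine partition. For $n\ge N$ the projection functor sends $e_\lambda(n)\in A_n(U_I)$ to the primitive idempotent of $End_{\mathcal{W}_n(\mathcal{C})}([U_I]_n)$ with image $V(\lambda_n)$ --- precisely because it identifies $ev_n\circ\Phi$ with composition in $\mathcal{W}_n(\mathcal{C})$ --- so the image of $\tilde V(\lambda)$ is $V(\lambda_n)$.

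The main obstacle is the uniform-in-$t$ control underlying (1) and (2): proving that the decompositions of $[U_I]_n$ stabilize with multiplicities independent of $n$, and then pinning the singular locus exactly inside $\mathbb{Z}_{\ge 0}$ rather than at some ad hoc finite set depending on $U_I$. This requires a genuine analysis of the Gram determinants of the bilinear pairings on $H(U_I,U_I)$ as polynomials in $x$ --- the wreath-product analogue of Deligne's computation for $\underline{Rep}(S_t)$ --- together with the block-theoretic lifting-of-idempotents argument needed to salvage (1) at the non-semisimple points $t\in\mathbb{Z}_{\ge 0}$.
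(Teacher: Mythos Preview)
The paper does not prove this theorem at all: it is quoted from Mori (\cite{D}, Theorems 4.13 and 5.6), and the section in which it appears begins by saying ``This section is largely expository, and all the proofs will be left to the references.'' So there is no proof in the paper to compare your proposal against.

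That said, your outline is a reasonable high-level summary of the strategy Mori uses, which in turn is modeled on Comes--Ostrik \cite{I}: control everything through the $k[x]$-family of endomorphism algebras of the standard objects, establish stabilization of the decomposition of $[U_I]_n$ via the symmetric-function dictionary, apply a Tits-deformation argument to get semisimplicity at generic $t$, and build the projection functor from the evaluation maps $ev_n$. You also correctly flag the genuine work: locating the singular set precisely inside $\mathbb{Z}_{\ge 0}$ requires an explicit Gram-determinant computation, and salvaging (1) at non-semisimple $t$ needs the block classification and idempotent-lifting argument. As written your proposal names these steps rather than executing them, so it is a proof plan rather than a proof --- but since the paper itself defers entirely to \cite{D}, there is nothing further to compare.
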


In order to pass results from $S_t(\mathcal{C})$ at generic values of $t$ to the more classical wreath product categories $\mathcal{W}_n(\mathcal{C})$ we will need a more detailed description of what happens at integer values of $t$.  For our purposes however it will be convenient to do this after we have defined a filtration in the next section.

\begin{remark}
 These sequences of irreducible representations $V(\lambda_n)$ of $S_n(G)$ are exactly those that show up in finitely generated $FI_G$ modules, as defined by Gan and Li in \cite{M} and developed further by Sam and Snowden in \cite{SS}. This is consistent with the philosophy that these Deligne categories can be thought of as models for a stable representation category.
\end{remark}

\subsection{Induction and the $|\lambda|$ filtration} 





An immediate consequence of Mori's construction in terms of the objects $[U_I]$ is the existence of induction functors $\mathcal{W}_n( \mathcal{C}) \boxtimes S_t(\mathcal{C}) \rightarrow S_{t+n}(\mathcal{C})$ which interpolate the induction functors $\mathcal{W}_n( \mathcal{C}) \boxtimes \mathcal{W}_m( \mathcal{C})\rightarrow \mathcal{W}_{n+m}( \mathcal{C})$ where we fix $n$ and let $m$ grow to infinity. This defines a categorical action of the tensor category $\mathcal{W}_*( \mathcal{C})$ on $S_*(\mathcal{C}) := \bigoplus_{t\in k} S_t(\mathcal{C})$.  

 Similarly, we get restriction functors the other direction which are two sided adjoints to the induction functors and interpolate the corresponding restriction functors between the genuine wreath product categories.

Mori's construction ensures that objects of $S_{t}(\mathcal{C})$  occur as summands in objects of the form: 
\begin{equation}{\label{A}}
{\rm Ind}_{\mathcal{W}_k( \mathcal{C}) \boxtimes S_{t-k}(\mathcal{C})}^{S_{t}(\mathcal{C})}(W\boxtimes \textbf{1}_{S_{t-k}(\mathcal{C})})
\end{equation}
for some natural number $k$ and $W \in \mathcal{W}_k( \mathcal{C})$.   In particular we could describe $\tilde{V}(\lambda)$  as the unique indecomposable summand of: $$M(\lambda) := {\rm Ind}_{\mathcal{W}_n( \mathcal{C}) \boxtimes S_{t-n}(\mathcal{C})}^{S_{t}(\mathcal{C})}(V(\lambda) \boxtimes \textbf{1}_{S_{t-n}(\mathcal{C})})$$ not occurring as a summand in an object of the form (\ref{A}) for any $k <n$. This suggests we consider the following filtrations:

\begin{definition}[The $|\lambda|$-filtration]
Define a filtration on $S_{t}(\mathcal{C})$ by putting an object $M$ in the $k$th level of the filtration if $M$ occurs as a summand in an object of the form of (\ref{A}). Analogously define such filtrations on the categories $\mathcal{W}_n(\mathcal{C})$.

\end{definition}

Immediately we see that $\tilde{V}(\lambda)$ is minimally contained in the $|\lambda|$th level of the filtration on $S_{t}(\mathcal{C})$. Similarly $V(\lambda)$ is minimally contained in the $(n - \mu( \textbf{1})_1)$th level of the filtration on $\mathcal{W}_n(\mathcal{C})$.  


By the nature of its definition, this filtration is well behaved with respect to the induction functors.  If we let $c_{\lambda, \mu}^\gamma$ denote the structure constants for $(\bigoplus_n K_0(\mathcal{W}_n(\mathcal{C})), \cdot)$, which are just products of the usual Littlewood-Richardson coefficients by Proposition 2.2 part 3. Then by looking at the Littlewood-Richardson rule with one partition having a long first row, we get the following relation:

\begin{equation}{\label{B}}
{\rm Ind}_{\mathcal{W}_n( \mathcal{C}) \boxtimes S_{t-n}(\mathcal{C})}^{S_{t}(\mathcal{C})}(V(\lambda) \boxtimes \tilde{V}(\mu))
 = (\bigoplus c_{\lambda, \mu}^\gamma \tilde{V}(\gamma) )\oplus \{ \text{terms lower in the filtration} \}
\end{equation}

Frobenius reciprocity and similar analysis of the Littlewood-Richardson rule gives us the ``lead term" for restrictions of indecomposables:

\begin{equation}{\label{C}}
{\rm Res}_{\mathcal{W}_n( \mathcal{C}) \boxtimes S_{t-n}(\mathcal{C})}^{S_{t}(\mathcal{C})}(\tilde{V}(\mu)) = (\textbf{1} \boxtimes \tilde{V}(\mu)) \oplus \{ \text{terms} \ M \boxtimes \tilde{V}(\nu) \text{ with } |\nu| < |\mu| \}
\end{equation}

In the case of $\mathcal{C} =Vect_k$ this $S_t(\mathcal{C})$ is equivalent to $\underline{Rep}(S_t)$, and our filtration agrees with the filtration defined by Deligne in \cite{B} and by Marin in \cite{A}.  In that case the filtration was also well behaved with respect to the internal tensor structure, and we had that the associated graded Grothendieck ring was isomorphic to the ring of symmetric functions.  In our setting we have the following generalization:

\begin{lemma}
The filtration defined above gives the Grothendieck ring $K_0(S_t(\mathcal{C}))$ the structure of a filtered ring.  Moreover, the associated graded ring is then isomorphic to $(\bigoplus_n K_0(\mathcal{W}_n(\mathcal{C})), \cdot)$, where the isomorphism sends the image of $[\tilde{V}(\lambda)]$ to $[V(\lambda)]$.
\end{lemma}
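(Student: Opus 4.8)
The plan is to identify the filtered pieces of $K_0(S_t(\mathcal{C}))$ explicitly, check that the filtration is multiplicative by means of a projection formula together with the leading-term formulas (\ref{B}) and (\ref{C}), and then read the associated graded multiplication off (\ref{B}). I begin by describing the filtration on the Grothendieck ring. Recall that $K_0(S_t(\mathcal{C}))$ is free on the classes $[\tilde{V}(\lambda)]$, $\lambda \in \mathcal{P}^{\mathcal{C}}$; let $F_k K_0(S_t(\mathcal{C}))$ denote the span of classes of objects in the $k$th level. Using induction in stages, together with the fact --- immediate from Frobenius reciprocity, as ${\rm Res}$ is monoidal --- that the unit of $S_{t-k}(\mathcal{C})$ is a direct summand of ${\rm Ind}_{\mathcal{W}_1(\mathcal{C}) \boxtimes S_{t-k-1}(\mathcal{C})}^{S_{t-k}(\mathcal{C})}(\textbf{1} \boxtimes \textbf{1})$, the filtration is increasing. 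Since $M(\lambda)$ lies in level $|\lambda|$ with $M(\lambda) \cong \tilde{V}(\lambda) \oplus (\text{object of level }< |\lambda|)$, and every object of $\mathcal{W}_k(\mathcal{C})$ is a sum of $V(\lambda)$'s with $|\lambda| = k$, an induction on $k$ shows that every object of level $k$ has all of its indecomposable summands among the $\tilde{V}(\rho)$ with $|\rho| \le k$; conversely each such $\tilde{V}(\lambda)$ lies in level $|\lambda| \le k$. Hence $F_k K_0(S_t(\mathcal{C})) = \bigoplus_{|\lambda| \le k} \mathbb{Z}\,[\tilde{V}(\lambda)]$, so these classes form a filtered basis with $[\tilde{V}(\lambda)]$ in degree $|\lambda|$.

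Next I would prove $F_j \cdot F_k \subseteq F_{j+k}$, i.e. that $\tilde{V}(\lambda) \otimes \tilde{V}(\mu)$ lies in level $\le |\lambda| + |\mu|$. Put $m = |\mu|$. As $\tilde{V}(\mu)$ is a summand of $M(\mu) = {\rm Ind}_{\mathcal{W}_m(\mathcal{C}) \boxtimes S_{t-m}(\mathcal{C})}^{S_t(\mathcal{C})}(V(\mu) \boxtimes \textbf{1})$, the product $\tilde{V}(\lambda) \otimes \tilde{V}(\mu)$ is a summand of $\tilde{V}(\lambda) \otimes M(\mu)$, and the projection formula
\[
X \otimes {\rm Ind}_{\mathcal{W}_m(\mathcal{C}) \boxtimes S_{t-m}(\mathcal{C})}^{S_t(\mathcal{C})}(Y) \cong {\rm Ind}_{\mathcal{W}_m(\mathcal{C}) \boxtimes S_{t-m}(\mathcal{C})}^{S_t(\mathcal{C})}\big( {\rm Res}_{\mathcal{W}_m(\mathcal{C}) \boxtimes S_{t-m}(\mathcal{C})}^{S_t(\mathcal{C})}(X) \otimes Y \big)
\]
identifies $\tilde{V}(\lambda) \otimes M(\mu)$ with the induction from $\mathcal{W}_m(\mathcal{C}) \boxtimes S_{t-m}(\mathcal{C})$ of ${\rm Res}(\tilde{V}(\lambda)) \otimes (V(\mu) \boxtimes \textbf{1})$. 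By (\ref{C}), every summand of ${\rm Res}(\tilde{V}(\lambda))$ has the form $P \boxtimes \tilde{V}(\nu)$ with $|\nu| \le |\lambda|$, so ${\rm Res}(\tilde{V}(\lambda)) \otimes (V(\mu) \boxtimes \textbf{1})$ breaks into terms $(P \otimes V(\mu)) \boxtimes \tilde{V}(\nu)$ with $P \otimes V(\mu) \in \mathcal{W}_m(\mathcal{C})$; absorbing each $\tilde{V}(\nu)$ into $M(\nu)$ and using induction in stages shows that such a term contributes only to level $m + |\nu| \le |\mu| + |\lambda|$. Therefore $K_0(S_t(\mathcal{C}))$ is a filtered ring.

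Finally I would extract the top-degree term from the same computation. Of the summands $P \boxtimes \tilde{V}(\nu)$ of ${\rm Res}(\tilde{V}(\lambda))$, only $\textbf{1} \boxtimes \tilde{V}(\lambda)$ has $|\nu| = |\lambda|$ by (\ref{C}), and its contribution to $\tilde{V}(\lambda) \otimes M(\mu)$ is ${\rm Ind}_{\mathcal{W}_m(\mathcal{C}) \boxtimes S_{t-m}(\mathcal{C})}^{S_t(\mathcal{C})}(V(\mu) \boxtimes \tilde{V}(\lambda))$, whose class by (\ref{B}) equals $\sum_{|\gamma| = |\lambda| + |\mu|} c_{\mu,\lambda}^\gamma [\tilde{V}(\gamma)]$ modulo $F_{|\lambda|+|\mu|-1}$. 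Since $M(\mu)$ differs from $\tilde{V}(\mu)$ only by summands $\tilde{V}(\rho)$ with $|\rho| < |\mu|$, and $\tilde{V}(\lambda) \otimes \tilde{V}(\rho) \in F_{|\lambda|+|\mu|-1}$ by the previous paragraph, we obtain
\[
[\tilde{V}(\lambda) \otimes \tilde{V}(\mu)] \equiv \sum_{|\gamma| = |\lambda|+|\mu|} c_{\lambda,\mu}^\gamma\,[\tilde{V}(\gamma)] \pmod{F_{|\lambda|+|\mu|-1}},
\]
using that $c_{\lambda,\mu}^\gamma = c_{\mu,\lambda}^\gamma$. Comparing with Proposition 2.2(3), the $\mathbb{Z}$-linear bijection determined by $\overline{[\tilde{V}(\lambda)]} \mapsto [V(\lambda)]$ is then a graded ring isomorphism from ${\rm gr}\,K_0(S_t(\mathcal{C}))$ onto $(\bigoplus_n K_0(\mathcal{W}_n(\mathcal{C})), \cdot)$.

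The step I expect to be the main obstacle is establishing the projection formula in this setting --- that ${\rm Ind}$ is genuinely a morphism of module categories over $\mathcal{W}_m(\mathcal{C}) \boxtimes S_{t-m}(\mathcal{C})$, so that the displayed isomorphism holds on the nose (this should follow from Mori's explicit description of induction via the objects $[U_I]$, or from the general module-category formalism) --- together with the bookkeeping needed to be sure that, after applying (\ref{C}) and passing from each $\tilde{V}(\nu)$ to $M(\nu)$, the unique summand reaching filtration degree $|\lambda| + |\mu|$ is the one coming from the leading term $\textbf{1} \boxtimes \tilde{V}(\lambda)$ of the restriction.
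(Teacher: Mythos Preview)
Your proposal is correct and follows essentially the same route as the paper: both arguments replace $\tilde{V}(\mu)$ by $M(\mu)$, apply the projection formula to rewrite $\tilde{V}(\lambda)\otimes M(\mu)$ as ${\rm Ind}\big({\rm Res}(\tilde{V}(\lambda))\otimes(V(\mu)\boxtimes\textbf{1})\big)$, then use (\ref{C}) to isolate the leading restriction term and (\ref{B}) to extract the structure constants $c_{\lambda,\mu}^\gamma$. Your write-up is more explicit than the paper's in first pinning down $F_k=\bigoplus_{|\lambda|\le k}\mathbb{Z}[\tilde{V}(\lambda)]$ and separately verifying $F_j\cdot F_k\subseteq F_{j+k}$ before reading off the associated graded, whereas the paper folds these into a single leading-term computation; and you are right that the projection formula is invoked without comment in the paper as well.
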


\begin{proof}
Let $M(\lambda) = {\rm Ind}_{\mathcal{W}_n( \mathcal{C}) \boxtimes S_{t-n}(\mathcal{C})}^{S_{t}(\mathcal{C})}(V(\lambda) \boxtimes \textbf{1}_{S_{t-n}(\mathcal{C})})$.  At the level of the Grothendieck ring we have that $[M(\lambda)] = [\tilde{V}(\lambda)] + \{ \text{terms lower in the filtration}\}$.  So inductively we can conclude that $[\tilde{V}(\mu) \otimes \tilde{V}(\lambda)]$ and  $[\tilde{V}(\mu) \otimes M(\lambda)]$ have the same highest order terms with respect to this filtration. To simplify the notation we will now begin omitting the subscripts and superscripts from the induction and restriction functors, they all will go between $\mathcal{W}_n( \mathcal{C}) \boxtimes S_{t-n}(\mathcal{C})$ and $S_{t}(\mathcal{C})$;

$$\tilde{V}(\mu) \otimes M(\lambda) = \tilde{V}(\mu) \otimes {\rm Ind}(V(\lambda) \boxtimes  \textbf{1})$$

$$ = {\rm Ind} ({\rm Res}(\tilde{V}(\mu)) \otimes (V(\lambda) \boxtimes  \textbf{1}))$$

By (\ref{C}) this becomes:

$$  \tilde{V}(\mu) \otimes M(\lambda) = {\rm Ind} (((\textbf{1} \boxtimes \tilde{V}(\mu) \oplus \{ \text{terms} \ M \boxtimes \tilde{V}(\nu) \text{ with} \  |\nu| < |\mu| \}) \otimes (V(\lambda) \boxtimes  \textbf{1}) )$$

$$  \ \ \ \ \ \   \  = {\rm Ind}( (V(\lambda) \boxtimes \tilde{V}(\mu)) \oplus \{ \text{terms} \ M \boxtimes \tilde{V}(\nu) \text{ with} \  |\nu| < |\mu| \} )$$

Which by (\ref{B}) becomes:

$$ \tilde{V}(\mu) \otimes M(\lambda) =  \bigoplus c_{\lambda, \mu}^\gamma \tilde{V}(\gamma) \oplus \{ \text{summands lower in the filtration} \}$$

Since the coefficients $c_{\lambda, \mu}^\gamma$ were the structure constants for the induction product, we see that indeed the associated graded Grothendieck ring of $S_{t}(\mathcal{C})$ is isomorphic to the induction ring $(\bigoplus_n K_0(\mathcal{W}_n(\mathcal{C})), \cdot)$.

\end{proof}

This combined with proposition 2.2 part 3 immediately gives us the following useful corollary:

\begin{corollary} \label{isom}
The associated graded Grothendieck ring of $S_{t}(\mathcal{C})$ at generic $t$ with respect to the $|\lambda|$-filtration is isomorphic to $\bigotimes_{U \in I(\mathcal{C})}\Lambda$.  The map sends irreducible objects to products of Schur functions.

\end{corollary}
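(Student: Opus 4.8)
The plan is to obtain the statement purely formally by composing two isomorphisms that are already available. First I would invoke the preceding lemma: at generic $t$ the $|\lambda|$-filtration makes $K_0(S_t(\mathcal{C}))$ a filtered ring whose associated graded ring is isomorphic to the induction ring $(\bigoplus_n K_0(\mathcal{W}_n(\mathcal{C})), \cdot)$, with the class of $\tilde{V}(\lambda)$ in the $|\lambda|$-th graded piece going to $[V(\lambda)]$. Then I would apply Proposition 2.2 part 3, which identifies this induction ring, as a graded ring, with $\bigotimes_{U \in I(\mathcal{C})} \Lambda$ via $[V(\lambda)] \mapsto \bigotimes_{U \in I(\mathcal{C})} s_{\lambda(U)}$. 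Composing the two ring isomorphisms yields the desired isomorphism of the associated graded Grothendieck ring of $S_t(\mathcal{C})$ with $\bigotimes_{U} \Lambda$, sending the image of $[\tilde{V}(\lambda)]$ to the product of Schur functions $\bigotimes_U s_{\lambda(U)}$.

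The few points worth spelling out are bookkeeping rather than substance. Genericity enters only so that $S_t(\mathcal{C})$ is semisimple (Theorem 2.4 part 2); this guarantees $K_0(S_t(\mathcal{C}))$ has $\mathbb{Z}$-basis $\{[\tilde{V}(\lambda)] : \lambda \in \mathcal{P}^{\mathcal{C}}\}$, so that the $|\lambda|$-filtration has graded pieces freely spanned by the images of the $[\tilde{V}(\lambda)]$ with $|\lambda|$ fixed, and the preceding lemma applies verbatim. One should also check that the gradings line up: the filtration grading on the left corresponds under the first isomorphism to the grading of $\bigoplus_n K_0(\mathcal{W}_n(\mathcal{C}))$ by $n$, which Proposition 2.2 part 3 carries to the grading of $\bigotimes_U \Lambda$ by total degree; hence the composite is a graded ring isomorphism, not merely a ring isomorphism.

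I do not expect a genuine obstacle here: all of the content sits in the preceding lemma (the computation that the filtration is multiplicative with the expected leading term, done via Frobenius reciprocity and the Littlewood-Richardson rule for a partition with a long first row) and in Proposition 2.2 part 3 (the classical identification of the wreath-product induction ring with a tensor power of the ring of symmetric functions). The corollary is then just the composite of these, and the only care needed is to confirm that the map $[\tilde{V}(\lambda)] \mapsto \bigotimes_U s_{\lambda(U)}$ one reads off from the composition is exactly the one asserted in the statement.
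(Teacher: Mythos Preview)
Your proposal is correct and matches the paper's approach exactly: the paper states the corollary as an immediate consequence of the preceding lemma combined with Proposition 2.2 part 3, which is precisely the composition of isomorphisms you describe. The additional bookkeeping you spell out (semisimplicity at generic $t$, compatibility of gradings) is accurate and harmless, though the paper leaves it implicit.
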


\begin{section}{Deligne categories at integer $t$}

Our goal is to obtain a collection of generators for the representation rings of wreath products $S_n(G)$, or more generally the Grothendieck rings of the categorical wreath products $\mathcal{W}_n(\mathcal{C})$. To do this we want to take a nice system of generators for these relatively well behaved Deligne categories and pass them down to these more classical categories.  

In general some care needs to be taken in the Deligne category setting when taking $t$ to be a positive integer.  What happens in this case is handled in depth in the case of symmetric groups by Comes and Ostrik in \cite{I}, and their results were extended to the wreath product setting by Mori \cite{D}. We will outline the parts of the theory that are relevant for our purposes.

The Deligne category $S_{t=n}(\mathcal{C})$ fails to be semisimple or even abelian, but it has a tensor structure and we can still consider the split Grothendieck ring spanned by indecomposable objects. We want to think of these as being a link between the Grothendieck rings of $\mathcal{W}_n(\mathcal{C})$, and the Grothendieck rings of $S_{t}(\mathcal{C})$ at generic $t$.  We can make this precise via projection and lifting.

\begin{theorem}[\textbf{Description of projection, Mori \cite{D} theorem 5.6}]

 The projection functor of theorem 2.5 induces a surjective homomorphism from this split Grothendieck ring to the Grothendieck ring of $\mathcal{W}_n(\mathcal{C})$. Explicitly, it sends  $\tilde{V}_{t=n}(\lambda)$ to either $V(\lambda_n)$  if $n - |\lambda| \ge \lambda(\textbf{1}_\mathcal{C})_1$ or the zero object otherwise. 

\end{theorem}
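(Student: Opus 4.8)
The plan is to exploit that the projection $P \colon S_n(\mathcal{C}) \to \mathcal{W}_n(\mathcal{C})$ of Theorem 2.5(3) is a full, essentially surjective, $k$-linear tensor functor, which reduces the statement to a combinatorial computation with the standard objects. The homomorphism part is formal: being additive and monoidal, $P$ sends idempotents to idempotents and tensor products to tensor products, so it descends to a ring homomorphism of split Grothendieck rings, which is surjective because $P$ is essentially surjective. For an indecomposable $\tilde{V}_{t=n}(\lambda)$, fullness of $P$ gives a surjection of the finite-dimensional local algebra ${\rm End}_{S_n(\mathcal{C})}(\tilde{V}_{t=n}(\lambda))$ onto ${\rm End}_{\mathcal{W}_n(\mathcal{C})}(P\tilde{V}_{t=n}(\lambda))$; a quotient of a local ring is local or zero, and since $\mathcal{W}_n(\mathcal{C})$ is semisimple this forces $P\tilde{V}_{t=n}(\lambda)$ to be either $0$ or an irreducible object. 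It remains to identify which.

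To identify it I would use that $P$ intertwines the induction functors and respects the $|\lambda|$-filtration — both are immediate from Mori's construction, in which the projection and the induction functors are defined through the standard objects $[U_I]$ and satisfy $P([U_I]_n^{S_n(\mathcal{C})}) \cong [U_I]_n^{\mathcal{W}_n(\mathcal{C})}$, so that $P \circ {\rm Ind}^{S_n(\mathcal{C})}_{\mathcal{W}_k(\mathcal{C}) \boxtimes S_{n-k}(\mathcal{C})} \cong {\rm Ind}^{\mathcal{W}_n(\mathcal{C})}_{\mathcal{W}_k(\mathcal{C}) \boxtimes \mathcal{W}_{n-k}(\mathcal{C})} \circ ({\rm id} \boxtimes P)$ for $0 \le k \le n$. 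Applied to $M(\lambda)$ with $k = |\lambda|$ this gives, when $n \ge |\lambda|$, that $P(M_{t=n}(\lambda)) \cong {\rm Ind}^{\mathcal{W}_n(\mathcal{C})}_{\mathcal{W}_{|\lambda|}(\mathcal{C}) \boxtimes \mathcal{W}_{n-|\lambda|}(\mathcal{C})}(V(\lambda) \boxtimes \mathbf{1})$, whose class in $\bigotimes_U \Lambda$ (Proposition 2.2(3)) is, by Pieri's rule, $\sum_\gamma [V(\gamma_n)]$ summed over those $\gamma$ that agree with $\lambda$ away from $\mathbf{1}_{\mathcal{C}}$, have $\lambda(\mathbf{1}_{\mathcal{C}})/\gamma(\mathbf{1}_{\mathcal{C}})$ a horizontal strip, and satisfy $n - |\gamma| \ge \lambda(\mathbf{1}_{\mathcal{C}})_1$; the term $\gamma = \lambda$, which contributes $V(\lambda_n)$, occurs precisely when $n - |\lambda| \ge \lambda(\mathbf{1}_{\mathcal{C}})_1$, and it is the unique summand with $|\gamma| = |\lambda|$. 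Since $\tilde{V}_{t=n}(\lambda)$ is a direct summand of $M_{t=n}(\lambda)$, applying $P$ to a splitting idempotent shows that $P\tilde{V}_{t=n}(\lambda)$, if nonzero, is one of these irreducibles $V(\gamma_n)$. As $V(\gamma_n)$ lies minimally in the $|\gamma|$-th level of the $|\lambda|$-filtration on $\mathcal{W}_n(\mathcal{C})$ and $\tilde{V}_{t=n}(\lambda)$ lies minimally in the $|\lambda|$-th level on $S_n(\mathcal{C})$, knowing that $P$ does not lower the filtration level of $\tilde{V}_{t=n}(\lambda)$ would force $\gamma = \lambda$, recovering $V(\lambda_n)$; and Theorem 2.5(4) already guarantees this for $n$ large.

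The crux — and the step I expect to be the main obstacle — is the behaviour at the non-generic value $t = n$. There $S_n(\mathcal{C})$ is not semisimple: the standard objects $M(\lambda)$ decompose into indecomposables differently than at generic $t$ (the transition matrix genuinely changes), and some indecomposables become \emph{exceptional}, being killed by $P$ and contributing a strictly-lower-level term. Controlling which ones requires the block structure of $\underline{Rep}(S_t)$ at $t = n$ due to Comes and Ostrik \cite{I}, extended to wreath products by Mori \cite{D}: the blocks of $S_n(\mathcal{C})$ are trivial or infinite linear chains, each nontrivial chain carrying exactly one exceptional object annihilated by $P$, and the reflection combinatorics of these chains identifies the exceptional $\lambda$ with precisely those for which $(n - |\lambda|, \lambda(\mathbf{1}_{\mathcal{C}}))$ is not a partition, i.e. $n - |\lambda| < \lambda(\mathbf{1}_{\mathcal{C}})_1$ (this also absorbs the degenerate range $n < |\lambda|$), while on every non-exceptional object $P$ preserves the filtration level, so the level-matching of the previous paragraph forces $P\tilde{V}_{t=n}(\lambda) \cong V(\lambda_n)$. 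Thus the homomorphism and surjectivity are formal, the candidate images come from Pieri's rule in $\bigotimes_U \Lambda$, and the remaining genuinely hard point is the block-combinatorial analysis of $S_n(\mathcal{C})$ at integer $t$ that isolates the exceptional objects.
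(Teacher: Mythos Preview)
The paper does not supply its own proof of this statement: it is recorded as a citation of Mori \cite{D}, Theorem~5.6, and used as a black box. Your sketch is a correct reconstruction of how such a proof goes, and it matches the structure of Mori's argument. The formal parts --- that a full, essentially surjective, additive tensor functor induces a surjective ring map on split Grothendieck rings, and that the image of an indecomposable is zero or irreducible because a quotient of a local endomorphism ring is local or zero --- are exactly right. Identifying the candidate images by applying $P$ to $M(\lambda)$ and invoking Pieri in $\bigotimes_U \Lambda$ is also the natural route.

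You correctly isolate the non-formal content as the block analysis of $S_{t=n}(\mathcal{C})$: this is precisely what Mori does, reducing via his Proposition~5.4 to the Comes--Ostrik description of the blocks of $\underline{Rep}(S_t)$ and their behaviour under the projection functor. One caution: your closing assertion that ``on every non-exceptional object $P$ preserves the filtration level, so the level-matching forces $P\tilde V_{t=n}(\lambda)\cong V(\lambda_n)$'' is essentially a restatement of the theorem itself, so that paragraph should be read as locating where the real work lies (in \cite{I} and \cite{D}, Prop.~5.4) rather than as an independent argument. With that understanding, the proposal is sound.
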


In particular we have the following immediate corollary:

\begin{corollary}

A collection of generators of the split Grothendieck ring of  $S_{t=n}(\mathcal{C})$ consisting of elements of the form $[\tilde{V}_{t=n}(\lambda)]$ projects to a collection of generators of the Grothendieck ring of $\mathcal{W}_n(\mathcal{C})$ consisting of elements of the form $[V(\lambda_n)]$.

\end{corollary}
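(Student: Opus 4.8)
The plan is to read this off the preceding theorem. Write $\pi$ for the homomorphism it produces, i.e. the map on split Grothendieck rings induced by the projection functor $S_{t=n}(\mathcal{C}) \to \mathcal{W}_n(\mathcal{C})$ of Theorem 2.5. Since that projection functor is a tensor functor, $\pi$ is a ring homomorphism, and the preceding theorem tells us two further things: $\pi$ is surjective, and on the distinguished classes it sends $[\tilde{V}_{t=n}(\lambda)]$ to $[V(\lambda_n)]$ when $n - |\lambda| \ge \lambda(\textbf{1}_{\mathcal{C}})_1$ and to $0$ otherwise.

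Now suppose $\{ [\tilde{V}_{t=n}(\lambda)] : \lambda \in S \}$ is a generating set for the split Grothendieck ring of $S_{t=n}(\mathcal{C})$, indexed by some $S \subseteq \mathcal{P}^{\mathcal{C}}$. A surjective ring homomorphism carries a generating set of its source onto a generating set of its target, so $\{ \pi([\tilde{V}_{t=n}(\lambda)]) : \lambda \in S \}$ generates $K_0(\mathcal{W}_n(\mathcal{C}))$. By the formula for $\pi$, each of these elements is either $[V(\lambda_n)]$ or $0$, and removing the zero elements leaves a set that still generates. Hence, writing $S' = \{ \lambda \in S : n - |\lambda| \ge \lambda(\textbf{1}_{\mathcal{C}})_1 \}$, the collection $\{ [V(\lambda_n)] : \lambda \in S' \}$ generates $K_0(\mathcal{W}_n(\mathcal{C}))$, which is exactly the asserted conclusion.

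I expect no genuine obstacle: all of the substance was absorbed into the preceding theorem, and what is left is the elementary fact that surjective ring maps send generators to generators, together with the bookkeeping needed to discard the indices $\lambda$ that map to zero. The only mild subtlety worth recording is that the image of a specified indecomposable can vanish, so the resulting generating set downstairs is indexed by a subset of $S$ rather than all of $S$; this is harmless, and it is precisely why the statement only claims a generating set ``of the form'' $[V(\lambda_n)]$ rather than a bijective correspondence of generators.
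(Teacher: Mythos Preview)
Your proof is correct and matches the paper's approach: the paper presents this as an ``immediate corollary'' of the preceding theorem with no further argument, and what you have written is exactly the unpacking of that immediacy --- surjective ring homomorphisms carry generating sets to generating sets, and the explicit formula for $\pi$ on indecomposables shows the image consists of classes $[V(\lambda_n)]$ (possibly after discarding zeros).
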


 

Next we want to relate the split Grothendieck rings at positive integer $t$ to the Grothendieck ring at generic $t$. We need to be a bit careful because the indecomposable objects $\tilde{V}_{t=n}(\lambda)$ of $S_{t=n}(\mathcal{C})$ are not always flat deformations of the irreducible objects $\tilde{V}(\lambda)$ in the Deligne categories for generic $t$. In particular we do not expect the map $[\tilde{V}_{t=n}(\lambda)] \mapsto [\tilde{V}(\lambda)]$ to be a homomorphism of between these rings. We can relate these by the process of lifting, which has the following (simplified) description:

\begin{theorem}[\textbf{Description of lifting, after \cite{I} lemma 5.20}] \

There exists a map Lift$_t$ from objects of $S_{t=n}(\mathcal{C})$ to objects of $S_T(\mathcal{C})$ where $T$ is a formal variable, satisfying:

\begin{enumerate}
\item Lift$_t$ descends to an isomorphism from the split Grothendieck ring $S_{t=n}(\mathcal{C})$ to the Grothendieck ring of $S_T(\mathcal{C})$.

\item This isomorphism sends $[\tilde{V}_{t=n}(\lambda)]$ to either $[\tilde{V}(\lambda)]$ or $[\tilde{V}(\lambda)]+[\tilde{V}(\lambda ')]$ for an explicitly described $\lambda'$ depending on the combinatorics of $n$ and $\lambda$ satisfying $|\lambda'| < |\lambda|$.

\end{enumerate}

\end{theorem}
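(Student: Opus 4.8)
The strategy, following Comes and Ostrik \cite{I} and its wreath-product extension by Mori \cite{D}, is to realize $S_{t=n}(\mathcal{C})$ and a scalar extension of $S_T(\mathcal{C})$ as the special and generic fibers of a single Deligne category over a complete local ring, and then to move indecomposable objects from the special fiber to the generic fiber by lifting the idempotents that cut them out. Concretely, set $\hbar = T-n$ and let $R = k[[\hbar]]$, a complete discrete valuation ring with residue field $k$ and fraction field $K$. The Hom spaces $H(U_I,V_J)$ and the polynomial-valued composition map $\Phi$ of Lemma 2.3 live over $k[x]$, so repeating Mori's construction with $x$ specialized to $n+\hbar \in R$ produces an $R$-linear monoidal category $S_R(\mathcal{C})_0$ and its Karoubian envelope $S_R(\mathcal{C})$. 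Reduction modulo $\hbar$ sends $ev_{n+\hbar}$ to $ev_n$, so the special fiber of $S_R(\mathcal{C})$ is $S_{t=n}(\mathcal{C})$; extension of scalars to $K$ makes the parameter generic, hence the generic fiber is semisimple (Theorem 2.5) and its Grothendieck ring is canonically $K_0(S_T(\mathcal{C}))$, the Grothendieck ring of a semisimple Deligne category being determined by the combinatorics of $\mathcal{P}^{\mathcal{C}}$ alone, not by the particular generic value of the parameter.

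First I would define $\mathrm{Lift}_t$ on objects. An indecomposable $\tilde{V}_{t=n}(\lambda)$ is the image of a primitive idempotent $e$ in the algebra $A := End_{S_{t=n}(\mathcal{C})}(M(\lambda)_n)$, where I use $M(\lambda)_n = {\rm Ind}(V(\lambda)\boxtimes\textbf{1})$ as ambient object; this $A$ is the reduction modulo $\hbar$ of the module-finite $R$-algebra $\hat{A} := End_{S_R(\mathcal{C})}(M(\lambda)_R)$. As $R$ is complete local and $\hbar\hat{A}$ lies in the radical of $\hat{A}$, idempotents lift: I pick a lift $\hat{e}$ of $e$ and set $\mathrm{Lift}_t(\tilde{V}_{t=n}(\lambda))$ to be the image of $\hat{e}$, read through the generic fiber as an object of $S_T(\mathcal{C})$, and then pass to its class in $K_0(S_T(\mathcal{C}))$. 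Two lifts are conjugate by a unit of $\hat{A}$, and a routine argument as in \cite{I} shows this class is independent of all choices; direct sums and tensor products lift to orthogonal direct sums and tensor products of idempotents compatibly with the base changes $R \to k$ and $R \to K$, so $\mathrm{Lift}_t$ respects $\oplus$ and $\otimes$ and descends to the ring homomorphism from the split Grothendieck ring of $S_{t=n}(\mathcal{C})$ to $K_0(S_T(\mathcal{C}))$ demanded by part (1).

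Next I would prove bijectivity and the rough shape of part (2). By construction $\mathrm{Lift}_t(\tilde{V}_{t=n}(\lambda))$ is a direct summand of $M(\lambda)_T = {\rm Ind}(V(\lambda)\boxtimes\textbf{1}_{S_{T-|\lambda|}(\mathcal{C})})$, which by (\ref{B}) with the trivial second factor equals $\tilde{V}(\lambda)$ plus summands strictly lower in the $|\lambda|$-filtration. Reduction modulo $\hbar$ respects filtration level and sends $\mathrm{Lift}_t(\tilde{V}_{t=n}(\lambda))$ back to $\tilde{V}_{t=n}(\lambda)$, which by the construction of these indecomposables is minimally at level $|\lambda|$; hence $\mathrm{Lift}_t(\tilde{V}_{t=n}(\lambda))$ must contain $\tilde{V}(\lambda)$ with multiplicity exactly one and every other constituent strictly lower. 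Consequently the matrix expressing $\{[\mathrm{Lift}_t\,\tilde{V}_{t=n}(\lambda)]\}$ in the basis $\{[\tilde{V}(\lambda)]\}$ — both indexed by $\mathcal{P}^{\mathcal{C}}$ — is unitriangular with unit diagonal for the $|\lambda|$-filtration, so it is invertible over $\mathbb{Z}$. This proves $\mathrm{Lift}_t$ is an isomorphism (part (1)) and gives $[\mathrm{Lift}_t\,\tilde{V}_{t=n}(\lambda)] = [\tilde{V}(\lambda)] + \sum_{|\mu|<|\lambda|} m_\mu\,[\tilde{V}(\mu)]$.

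The sharpening of part (2) — that only one extra term survives, with the explicit $\lambda'$ — is where I expect the real content to sit, and it is precisely the block-and-linkage analysis of \cite{I} Lemma 5.20, carried over to wreath products in \cite{D}. The point is that the non-semisimple blocks of $S_{t=n}(\mathcal{C})$ have the same combinatorial shape as those of $\underline{Rep}(S_n)$, a block being a zig-zag along which indecomposables are built by two-step extensions, so the lift of a non-flat $\tilde{V}_{t=n}(\lambda)$ acquires exactly the one neighbour $\tilde{V}(\lambda')$ in its block, with $\lambda'$ produced from $\lambda$ by the explicit box-moving rule governed by $n$ and $\lambda(\textbf{1}_{\mathcal{C}})$. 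The idempotent-lifting mechanics above are formal; the obstacle is that this last combinatorial refinement cannot be extracted from the filtration and has to be imported from \cite{I} and \cite{D}.
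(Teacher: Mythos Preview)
Your proposal is correct and follows essentially the same approach as the paper: idempotent lifting over a complete local base for the existence of $\mathrm{Lift}_t$, and then the Comes--Ostrik block combinatorics, transported to the wreath-product setting via Mori's identification of the blocks of $S_t(\mathcal{C})$ with blocks of $\underline{Rep}(S_{t'})$, for the explicit description in part (2). The paper's proof is two sentences and defers both steps to \cite{I} and \cite{D}; you have simply unpacked the first step (the $R=k[[\hbar]]$ construction and the unitriangularity argument for bijectivity) in more detail than the paper does, while correctly recognizing that the ``at most one extra term'' refinement must be imported from the block analysis rather than from the filtration alone.
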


\begin{proof}
The existence of a lift follows from standard facts about lifting of idempotents, and the proof is identical to that for $\underline{Rep}(S_t)$.  The combinatorics follows from the case of $\underline{Rep}(S_t)$ via an explicit identification of the blocks of $\mathcal{S}_t(\mathcal{C})$ with blocks of  $\underline{Rep}(S_{t'})$ given by Mori (\cite{D}, Prop 5.4).

\end{proof}

In particular, the additional ``error" term picked up when lifting is lower in the filtration.  So while the map  $[\tilde{V}_{t=n}(\lambda)] \mapsto [\tilde{V}(\lambda)]$ is not an isomorphism (or even a homomorphism) between the split Grothendieck ring at integer $t$ and the Grothendieck ring at generic $t$, it is an isomorphism at the level of associated graded rings.  We now have the following key lemma:

\begin{lemma} \label{main}
If $\{[\tilde{V}(\lambda)] \ | \ \lambda \in J \}$ is a collection of generators for the associated graded Grothendieck ring of $S_t(\mathcal{C})$ at generic values of $t$ for some indexing set $J \subset \mathcal{P}^{\mathcal{C}}$, then $\{ [V(\lambda_n)] \ | \ \lambda \in J \}$ is a collection of generators for the Grothendieck ring of $\mathcal{W}_n(\mathcal{C})$.

\end{lemma}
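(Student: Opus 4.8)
The plan is to chase the generation property through two maps: the lifting isomorphism (which is an isomorphism of associated graded rings, though not of the rings themselves) and the projection homomorphism (which is surjective). The key observation is that "being a generating set" is a property that transfers along surjections in one direction, and that for associated graded rings a generating set of $\mathrm{gr}\, R$ lifts to a generating set of $R$ whenever the filtration is suitably bounded (exhaustive and, in each degree, finitely many indecomposables below a given level — which holds here because $\mathcal{P}_n^{\mathcal{C}}$ is finite and the filtration on $\mathcal{W}_n(\mathcal{C})$ has only finitely many levels).

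First I would record the elementary fact: if $A$ is a filtered ring with exhaustive filtration $A_0 \subseteq A_1 \subseteq \cdots$ and $\{\bar a_i\}$ generate $\mathrm{gr}\, A$ as a ring, then any lifts $\{a_i\} \subseteq A$ generate $A$. Indeed, for $x \in A_k$, write the image of $x$ in $\mathrm{gr}_k A$ as a polynomial in the $\bar a_i$; lifting that polynomial to $A$ and subtracting gives an element of $A_{k-1}$, and one descends by induction on $k$ (the base case $A_0 = \mathbb{Z}$, or $k$ below the bottom of the filtration, being trivial). I would apply this with $A$ the split Grothendieck ring of $S_{t=n}(\mathcal{C})$: by Theorem~3.3 the lift map sends $[\tilde V_{t=n}(\lambda)]$ to $[\tilde V(\lambda)]$ plus strictly lower-filtration error, so on associated graded rings $\{[\tilde V(\lambda)] \mid \lambda \in J\}$ generating $\mathrm{gr}\, K_0(S_t(\mathcal{C}))$ at generic $t$ — which by the block identification is the same graded ring — forces $\{[\tilde V_{t=n}(\lambda)] \mid \lambda \in J\}$ to generate $A$.

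Then I would invoke Corollary~3.2: the projection functor $S_{t=n}(\mathcal{C}) \to \mathcal{W}_n(\mathcal{C})$ induces a surjective ring homomorphism sending $[\tilde V_{t=n}(\lambda)]$ to $[V(\lambda_n)]$ (or to $0$, in which case that generator is simply discarded). Since the image of a generating set under a surjective homomorphism is a generating set, $\{[V(\lambda_n)] \mid \lambda \in J\}$ generates $K_0(\mathcal{W}_n(\mathcal{C}))$, which is exactly the claim.

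**The main obstacle** is making sure the "generators of the associated graded ring lift to generators" step is deployed at the right place — namely for the integer-$t$ split Grothendieck ring via the lifting theorem, \emph{not} directly for $\mathcal{W}_n(\mathcal{C})$ — and checking the filtrations are exhaustive and bounded below so the inductive descent terminates. A subtlety worth a sentence: the hypothesis gives generators of $\mathrm{gr}\, K_0(S_t(\mathcal{C}))$ at \emph{generic} $t$, and one needs that this graded ring is canonically the same as the graded ring controlling the integer-$t$ lift; this is precisely what Theorem~3.3(2) and the block identification of Mori (\cite{D}, Prop 5.4) provide, since the error terms $\lambda'$ always satisfy $|\lambda'| < |\lambda|$ and hence vanish in the associated graded ring. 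Everything else is a routine two-step diagram chase.
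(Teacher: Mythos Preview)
Your proposal is correct and follows essentially the same three-step route as the paper: identify the associated graded rings at generic and integer $t$ via the lifting theorem, use the upper-triangularity/filtration descent to pass from $\mathrm{gr}$ to the split Grothendieck ring at $t=n$, then push forward along the surjective projection of Corollary~3.2. You have simply spelled out the ``standard upper triangularity argument'' and the boundedness hypotheses that the paper leaves implicit.
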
 

\begin{proof}

The previous remark allows us to transfer this to a system of generators for the associated graded Grothendieck ring at integer $t$.  A standard upper triangularity argument tells us that any lift of this system of generators will also generate the split Grothendieck ring. Corollary 3.2 allows us to transfer this down to  $\mathcal{W}_n(\mathcal{C})$.

\end{proof}

\begin{remark} Understanding these non-semisimple Deligne categories and their split Grothendieck rings in better detail is of independent interest as they seem to lie somewhere between classical representation theory and stable representation theory. Recently Inna Entova-Aizenbud \cite{H} used the non-semisimple Deligne categories for symmetric groups to find new identities involving the reduced and non-reduced Kronecker coefficients.

\end{remark}

\end{section}

\section{Explicit systems of generators}

Our goal is to write down explicit systems of generators for the Grothendieck rings of the wreath product categories $\mathcal{W}_n(\mathcal{C})$ consisting of irreducible objects.  By lemma \ref{main} it suffices to find an explicit system of generators for the associated graded Grothendieck ring of $S_t(\mathcal{C})$ at generic $t$.  By corollary \ref{isom} this is isomorphic to the tensor product of one copy of the ring $\Lambda$ of symmetric functions for each irreducible object of $\mathcal{C}$.

Once translated into the language of symmetric functions, for each irreducible object of $\mathcal{C}$ we need to choose a collection of Schur functions which generate the ring of symmetric functions.  Two well known such collections are the elementary and complete homogeneous symmetric functions, which are Schur functions corresponding to $\lambda = (1^k)$ or $\lambda = (k)$ respectively.  All that is left is to describe which objects these correspond to.



First let's handle the case when the irreducible object of $\mathcal{C}$  we are looking at is the unit object.  The elementary and complete homogeneous symmetric functions just correspond to the usual hook and length two partition representations of the natural copy of $\underline{Rep}(S_t)$ inside $S_t(\mathcal{C})$.   If $\mathcal{C}$ is the representation category of a finite group $G$ and $t = n$ is a natural number then under the projection to the category of representations of $S_n(G)$ these are just the corresponding representations of $S_n$ with trivial action of $G$.




If $V \in I(\mathcal{C})$ is a nontrivial irreducible then let $V^{k,\varepsilon}  := V^{\boxtimes k}\otimes \varepsilon \in \mathcal{W}_k(\mathcal{C})$ where $\varepsilon$ is either the sign or trivial representation for the copy of $Rep(S_k)$ in $\mathcal{W}_k(\mathcal{C})$.   Under our correspondence the elementary (resp. complete homogenous) symmetric functions correspond to the objects ${\rm Ind}_{\mathcal{W}_k(\mathcal{C})\boxtimes S_{t-k}(\mathcal{C})}^{S_{t}(\mathcal{C})}(V^{k,\varepsilon} \boxtimes  \textbf{1})$ for $\varepsilon$ the sign representation (resp. trivial).




Putting it all together in the case where $\mathcal{C}$  is the representation category of a finite group $G$ with $m$ equivalence classes of irreducible representations, we get $2^m$ easy to describe systems of generators for the representation ring of the wreath product $S_n(G)$. The previous remarks imply the following result:

\begin{theorem}{\textbf{(Systems of generators for representation rings)}}

If $G$ is a finite group, then the representation ring of the wreath product $S_n(G)$ is generated by either the hook or length two partition representations of $S_n$ along with induced representations ${\rm Ind}_{S_k(G)\times S_{n-k}(G)}^{S_n(G)}( (V^{\otimes k} \otimes \varepsilon_V) \boxtimes \textbf{1}_{S_{n-k}(G)})$ for each irreducible representation $V$ of $G$ and $k\le n$, where $\varepsilon_V$ is a consistent (not depending on $k$) choice of either the trivial or sign character of the symmetric group $S_k$.

\end{theorem}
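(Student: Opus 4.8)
The plan is to deduce Theorem 4.1 as a direct specialization of the machinery built up in Sections 2 and 3, so the work is really bookkeeping rather than new argument. By Lemma \ref{main}, it suffices to exhibit, for $\mathcal{C} = Rep(G)$ and generic $t$, a set of objects $\tilde{V}(\lambda)$ whose classes generate the associated graded Grothendieck ring of $S_t(\mathcal{C})$; by Corollary \ref{isom} that ring is $\bigotimes_{U \in I(\mathcal{C})} \Lambda$, one tensor factor of symmetric functions per irreducible representation of $G$. So the first step is purely about symmetric functions: fix, for each $U$, a generating set of $\Lambda$ consisting of Schur functions, and the standard choices are the elementary symmetric functions $e_k = s_{(1^k)}$ or the complete homogeneous $h_k = s_{(k)}$ (and we may choose independently for each $U$, giving $2^m$ systems when $G$ has $m$ irreducibles). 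Since the isomorphism of Corollary \ref{isom} sends $[\tilde{V}(\lambda)]$ to $\bigotimes_U s_{\lambda(U)}$, the generating Schur function $s_{\nu}$ sitting in the $U$-factor (with trivial partition in all other factors) corresponds to the object $\tilde{V}(\lambda)$ where $\lambda(U) = \nu$ and $\lambda(U') = \emptyset$ for $U' \ne U$.

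The second step is to translate each such $\tilde{V}(\lambda)$, where $\lambda$ is supported on a single irreducible $U$ and equals a one-row or one-column partition there, into an explicit object, and then push it down through the projection functor of Theorem 2.5. When $U = \textbf{1}_{\mathcal{C}}$, the object $\tilde{V}(\lambda)$ with $\lambda(\textbf{1}) = (1^k)$ or $(k)$ is, by the definition of $\lambda_n$ in (\ref{Z}), exactly the hook $\tilde V$ or length-two partition inside the canonical copy of $\underline{Rep}(S_t)$ in $S_t(\mathcal{C})$, and under projection to $\mathcal{W}_n(Rep(G)) = Rep(S_n(G))$ it becomes the corresponding representation of $S_n$ with $G$ acting trivially. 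When $U = V$ is a nontrivial irreducible, one identifies the generator $s_{(k)}$ or $s_{(1^k)}$ in the $V$-factor: because the induction functor $\mathcal{W}_k(\mathcal{C}) \boxtimes S_{t-k}(\mathcal{C}) \to S_t(\mathcal{C})$ corresponds on Grothendieck rings to the induction product (Proposition 2.2(3)) and $V^{\boxtimes k} \otimes \varepsilon \in \mathcal{W}_k(\mathcal{C})$ corresponds to $s_{(k)}$ or $s_{(1^k)}$ in the $V$-factor according as $\varepsilon$ is trivial or sign, the object ${\rm Ind}(V^{\boxtimes k} \otimes \varepsilon \boxtimes \textbf{1})$ realizes that generator. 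Projecting to $S_n(G)$ and using that the projection functor is a tensor functor intertwining induction (Theorem 2.5(3)), this becomes ${\rm Ind}_{S_k(G) \times S_{n-k}(G)}^{S_n(G)}((V^{\otimes k} \otimes \varepsilon_V) \boxtimes \textbf{1})$, exactly as in the statement.

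The third step is to assemble: for a fixed consistent choice of $\varepsilon_V$ for each nontrivial $V$ and a choice (hook vs. length-two) for the unit, the corresponding objects $\tilde{V}(\lambda)$ form a set whose image under Corollary \ref{isom} is $\{s_{\text{gen}} \text{ in the } U\text{-factor}\}_{U,k}$, which generates $\bigotimes_U \Lambda$ because each individual $\Lambda$ is generated by its $e_k$'s (or $h_k$'s). Lemma \ref{main} then immediately yields that the projected classes generate $K_0(\mathcal{W}_n(Rep(G))) = $ the representation ring of $S_n(G)$, finishing the proof.

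I do not expect a genuine obstacle: the only point requiring a little care is making sure the "consistent choice" of $\varepsilon_V$ (independent of $k$) is actually what is needed — but this is forced because for a single factor $\Lambda$ one must commit to \emph{either} all $e_k$ \emph{or} all $h_k$ to get a generating set, and mixing within one factor need not generate. A second minor point is that Lemma \ref{main} as stated requires the generators to be indexed by a set $J \subset \mathcal{P}^{\mathcal{C}}$ of $\lambda$'s with the projection landing on $V(\lambda_n)$; one should note that the unit-object generators are literally of this form, while the nontrivial-$V$ induced objects are themselves irreducible in $\mathcal{W}_k(\mathcal{C})$ and the induction products are handled at the level of the associated graded ring exactly as in the proof of Lemma 2.7, so they too fit the framework. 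Neither of these is more than a remark.
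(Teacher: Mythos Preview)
Your proposal is correct and follows essentially the same route as the paper: reduce via Lemma~\ref{main} and Corollary~\ref{isom} to choosing $e_k$'s or $h_k$'s in each tensor factor of $\bigotimes_{U}\Lambda$, then identify the corresponding objects as hooks/two-row partitions for the unit and as the induced objects ${\rm Ind}(V^{k,\varepsilon}\boxtimes\textbf{1})$ for nontrivial $V$. One small simplification of your second ``minor point'': for nontrivial $V$ the induced representation ${\rm Ind}_{S_k(G)\times S_{n-k}(G)}^{S_n(G)}((V^{\otimes k}\otimes\varepsilon_V)\boxtimes\textbf{1})$ is already irreducible and literally equals $V(\lambda_n)$ for the $\lambda$ supported on $V$, so it fits Lemma~\ref{main} on the nose without any associated-graded bookkeeping.
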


\begin{subsection}{The abelian group case and Marin's conjecture}

Of particular interest is the case when $G$ is an abelian group, this includes Coxeter groups in type B as well as the complex reflection groups $G(m,1,n)$.  Here we can get an alternate description of the objects corresponding to the elementary symmetric functions.

The sign-twisted representations ${\rm Ind}_{S_k(G)\times S_{n-k}(G)}^{S_n(G)}( (V^{\otimes k} \otimes \varepsilon)\boxtimes \textbf{1}_{S_{n-k}(G)})$ described in the previous section appear as summands in exterior powers of the relatively easy to describe representations ${\rm Ind}_{G \times S_{n-1}(G)}^{S_n(G)}( V \boxtimes \textbf{1}_{S_{n-1}(G)})$, along with other more complicated terms coming from the exterior powers of $V$ itself.  However if we start with a nontrivial character $\chi$ of $G$ these other terms all vanish and these exterior powers coincide with our sign-twisted representations.

 These exterior powers are perhaps closer to a direct generalization of hooks for these wreath products, and can be taken to be in our generating sets by choosing the elementary symmetric functions over the complete homogeneous symmetric functions for each character of $G$.  In particular if $G$ is abelian then all of its irreducible representations are characters and Theorem 4.1 becomes:
 
\begin{theorem}{\textbf{(Hook-like generating sets)}}

If $G$ is a finite abelian group, then the representation ring of the wreath product $S_n(G)$ is generated by the reflection representation of $S_n$, the $n$-dimensional representations ${\rm Ind}_{G \times S_{n-1}(G)}^{S_n(G)}( \chi \boxtimes \textbf{1}_{S_{n-1}(G)})$ for nontrivial characters $\chi$ of $G$, and exterior powers thereof.

\end{theorem}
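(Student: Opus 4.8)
The plan is to deduce this from Theorem~4.1 by making, for every irreducible character $V$ of the abelian group $G$, the choice $\varepsilon_V=\mathrm{sign}$. With this choice Theorem~4.1 asserts that the representation ring of $S_n(G)$ is generated by the hook representations of $S_n$ with trivial $G$-action, together with the objects $R_{\chi,k}:={\rm Ind}_{S_k(G)\times S_{n-k}(G)}^{S_n(G)}\left((\chi^{\otimes k}\otimes\mathrm{sign}_{S_k})\boxtimes\textbf{1}_{S_{n-k}(G)}\right)$ for each nontrivial character $\chi$ of $G$ and each $0\le k\le n$. So it is enough to check that every one of these generators is an exterior power of one of the representations listed in the present theorem; once that is done, the Theorem~4.1 generators all occur among the listed generators, and since they generate the whole representation ring, so do the listed ones.

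For the hook part this is the classical identity $\Lambda^k(\mathrm{refl})\cong V(n-k,1^k)$, where $\mathrm{refl}$ is the reflection representation of $S_n$ (inflated along $S_n(G)\twoheadrightarrow S_n$). A quick derivation: writing $P:=\textbf{1}\oplus\mathrm{refl}$ for the $n$-dimensional permutation representation of $S_n$, one has $\Lambda^k P\cong\Lambda^k\mathrm{refl}\oplus\Lambda^{k-1}\mathrm{refl}$, while $\Lambda^k P\cong{\rm Ind}_{S_k\times S_{n-k}}^{S_n}(\mathrm{sign}\boxtimes\textbf{1})\cong V(n-k,1^k)\oplus V(n-k+1,1^{k-1})$ by Pieri's rule, and an induction on $k$ finishes it. Hence the hooks among the Theorem~4.1 generators are precisely the exterior powers of $\mathrm{refl}$.

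The main step is the identification $\Lambda^k W_\chi\cong R_{\chi,k}$ for nontrivial $\chi$, where $W_\chi:={\rm Ind}_{G\times S_{n-1}(G)}^{S_n(G)}(\chi\boxtimes\textbf{1}_{S_{n-1}(G)})$ is the $n$-dimensional module in the statement. I would argue directly at the level of $S_n(G)$: restricted to the base group $G^{n}$, $W_\chi$ decomposes as $\bigoplus_{i=1}^{n}L_i$, where $L_i$ is the one-dimensional character of $G^{n}$ acting through $\chi$ on the $i$-th factor and trivially on the others, and $S_n$ permutes the lines $L_i$ in the tautological way. Choosing $0\ne v_i\in L_i$, the wedges $v_{i_1}\wedge\cdots\wedge v_{i_k}$ with $i_1<\cdots<i_k$ form a basis of $\Lambda^k W_\chi$ whose lines are permuted transitively by $S_n(G)$, and the stabilizer of the line through $v_1\wedge\cdots\wedge v_k$ is exactly $S_k(G)\times S_{n-k}(G)$, acting on that line through the character $(\chi^{\otimes k}\otimes\mathrm{sign}_{S_k})\boxtimes\textbf{1}_{S_{n-k}(G)}$ (the sign coming from reordering the wedge). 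This exhibits $\Lambda^k W_\chi$ as the induced module $R_{\chi,k}$. This is precisely the ``other terms vanish'' phenomenon mentioned before the theorem: the general expansion of $\Lambda^k$ of such an induced module involves contributions built from the exterior powers $\Lambda^j V$ of the inducing object $V$, and for a one-dimensional $V=\chi$ these vanish for all $j\ge2$, leaving only $R_{\chi,k}$.

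Combining the two identifications shows every generator furnished by Theorem~4.1 (with the above sign choice) is an exterior power of $\mathrm{refl}$ or of some $W_\chi$, which completes the argument. I expect the only genuine computation to be $\Lambda^k W_\chi\cong R_{\chi,k}$, and that is just the familiar ``exterior power of a (twisted) permutation module'' calculation; the one thing to watch is the bookkeeping of the sign character of $S_k$ and of the wreath-product subgroup $S_k(G)\times S_{n-k}(G)$. Alternatively one could run the whole argument one level up, computing $\Lambda^k$ of the corresponding standard objects in $S_t(\mathcal{C})$ and matching them with the elementary symmetric functions $e_k$ in $\bigotimes_{U}\Lambda$ via Corollary~\ref{isom}, and then invoking Lemma~\ref{main}; the computation in $S_n(G)$ above is slightly more direct and suffices.
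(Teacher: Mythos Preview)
Your proposal is correct and follows essentially the same route as the paper: deduce the statement from Theorem~4.1 with the uniform choice $\varepsilon_V=\mathrm{sign}$, identify the hook representations with $\Lambda^k(\mathrm{refl})$, and identify the sign-twisted induced modules $R_{\chi,k}$ with $\Lambda^k W_\chi$ using that $\Lambda^j\chi=0$ for $j\ge 2$. The paper's argument is contained in the two paragraphs preceding the theorem and is considerably more terse; your explicit description of $\Lambda^k W_\chi$ as a monomial module for $S_n(G)$ just fleshes out the ``other terms all vanish'' remark made there.
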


\begin{remark}
While this paper was being written the author was informed that this result for wreath products with Abelian groups has been recently proven by Schlank and Stapleton using different methods.
\end{remark}

Now let's specialize to the case when $G$ is cyclic of order 2.  The wreath products $S_n(G)$ are Coxeter groups $W$ of type $B_n$.  If $\chi$ is the nontrivial representation of $G$, then the reflection representation of $W$ is given by $V := {\rm Ind}_{G\times S_{n-1}(G)}^{S_n(G)} (\chi \boxtimes \textbf{1}_{S_{n-1}(G)})$.  Next we let $U$ be the reflection representation of $S_n$, upgraded to a $S_n(G)$ representation by letting $G$ act trivially. Translated into this language our theorem becomes:

\begin{theorem}{\textbf{(Marin's conjecture 6.2 for type $B_n$)}}

For $W$ a Coxeter group of type $B_n$ the representation ring $R(W)$ is generated by $\Lambda^k U, \Lambda^k V,  k \ge 0$.

\end{theorem}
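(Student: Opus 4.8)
The plan is to deduce this as a direct specialization of Theorem 4.2 once we unwind the representation theory of the order 2 group $G = \mathbb{Z}/2$. The wreath product $S_n(G)$ is the hyperoctahedral group, i.e.\ the Coxeter group $W$ of type $B_n$, and $G$ has exactly two irreducible representations: the trivial character and the unique nontrivial character $\chi$. So Theorem 4.2 applies with a single nontrivial character $\chi$, and its conclusion is that $R(W)$ is generated by the reflection representation of $S_n$, by the $n$-dimensional representation ${\rm Ind}_{G \times S_{n-1}(G)}^{S_n(G)}(\chi \boxtimes \textbf{1}_{S_{n-1}(G)})$, and by exterior powers of these.

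First I would recall that ``the reflection representation of $S_n$'' appearing in Theorem 4.2 is exactly the representation $U$: take the $(n-1)$-dimensional reflection (standard) representation of $S_n$ and inflate it along the quotient map $S_n(G) \twoheadrightarrow S_n$, i.e.\ let $G$ act trivially on each factor. Second, I would identify the $n$-dimensional representation $V := {\rm Ind}_{G\times S_{n-1}(G)}^{S_n(G)}(\chi \boxtimes \textbf{1}_{S_{n-1}(G)})$ with the reflection representation of $W$ as a Coxeter group of type $B_n$: indeed the reflection representation of $W$ is $k^n$ with $S_n$ permuting coordinates and each of the $n$ generators of $G^n$ acting by $-1$ on its coordinate, which is precisely the representation induced from the $1$-dimensional $\chi$ on the subgroup $G \times S_{n-1}(G)$ stabilizing a coordinate. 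Third, I would note that Theorem 4.2's generating set consists of these two representations together with all their exterior powers $\Lambda^k U$ and $\Lambda^k V$ for $k \ge 0$ (with $\Lambda^0$ the trivial representation), which is literally the statement of Theorem 4.3.

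The only points that need a sentence of justification are the two identifications above, and these are elementary facts about Coxeter groups of type $B_n$ and their reflection representations; once they are in place the theorem is simply Theorem 4.2 with $m = 1$. I do not expect any genuine obstacle here: the substance of the result lives in Theorem 4.2 (and ultimately in Lemma \ref{main} and Corollary \ref{isom}), and this final statement is a translation into Coxeter-theoretic language, matching Marin's Conjecture 6.2 in type $B$. If anything, the one place to be slightly careful is bookkeeping the trivial representation as $\Lambda^0$ (so that the generating set includes the unit of $R(W)$), and confirming that the exterior powers $\Lambda^k V$ do reproduce exactly the sign-twisted induced representations $\Lambda^k V = {\rm Ind}_{S_k(G)\times S_{n-k}(G)}^{S_n(G)}((\chi^{\otimes k}\otimes \varepsilon)\boxtimes \textbf{1})$ from Theorem 4.1 — but this is exactly the content of the discussion preceding Theorem 4.2 in the abelian case, where the ``other terms coming from exterior powers of $V$ itself'' vanish because $\chi$ is one-dimensional.
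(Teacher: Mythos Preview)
Your proposal is correct and matches the paper's approach exactly: the paper also treats this theorem as the immediate specialization of Theorem 4.2 to $G=\mathbb{Z}/2$, identifying $V$ with the induced representation ${\rm Ind}_{G\times S_{n-1}(G)}^{S_n(G)}(\chi\boxtimes\textbf{1})$ (the reflection representation of $W$) and $U$ with the inflated reflection representation of $S_n$. Your additional remarks on the explicit description of $V$ on $k^n$ and the $\Lambda^0$ bookkeeping are harmless elaborations of what the paper leaves implicit.
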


\end{subsection}

\bibliographystyle{amsplain}

\end{document}